\theoremstyle{definition}
\theoremstyle{remark}
\newtheorem*{fact*}{Fact}
\numberwithin{equation}{section}
\newtheoremstyle{probstyle}  %
{3ex}                        %
{0pt}                        %
{\normalfont}                %
{-3ex}                       %
{\normalfont\bfseries}       %
{.}                          %
{.5em}                       %
{}                           %
\theoremstyle{probstyle}
\newtheorem{lemma}{Lemma}
\DeclareUrlCommand{\bfurl}{}
\DeclareUrlCommand{\urlstyle}{}
\begin{document}
\title{Small Ramsey numbers for books, wheels, and generalizations}
\author{
    Bernard Lidick\'y\footnote{Department of Mathematics, Iowa State University.  E-mail:  \href{mailto:lidicky@iastate.edu}{lidicky@iastate.edu}. Research supported by NSF DSM-2152490 and Scott Hanna Professorship.} 
    \and Gwen McKinley\thanks{Center for Communications Research, La Jolla. E-mail: \href{mailto:gmckinle@ccr-lajolla.org}{gmckinle@ccr-lajolla.org}}
    \and Florian Pfender\thanks{Department of Mathematical and Statistical Sciences, University of Colorado Denver. E-mail: \href{mailto:Florian.Pfender@ucdenver.edu}{Florian.Pfender@ucdenver.edu}. Research is partially supported by NSF grant DMS-2152498.}
    \and Steven Van Overberghe\thanks{
Department of Applied Mathematics, Computer Science and Statistics, Ghent University. \href{mailto:steven.vanoverberghe@ugent.be}{steven.vanoverberghe@ugent.be}
    }
}
\maketitle

\begin{abstract}
    In this work, we give several new upper and lower bounds on Ramsey numbers for \textit{books} and \textit{wheels}, including a tight upper bound establishing $R(W_5, W_7) = 15$, matching upper and lower bounds giving $R(W_5, W_9) = 18$, $R(B_2, B_8) = 21$, and $R(B_3, B_7) = 20$, and a number of additional tight lower bounds for books. We use a range of different methods: flag algebras, local search, bottom-up generation, and enumeration of polycirculant graphs.
    
    We also explore generalized Ramsey numbers using similar methods. 
    Let $GR(r,K_s,t)$ denote the minimum number of vertices $n$ such that any $r$-edge-coloring of $K_n$ has a copy of $K_s$ with at most $t$ colors. We establish $GR(3,K_4,2) = 10, GR(4,K_4,3) = 10$, and some additional bounds.
\end{abstract}

\section{Introduction}
The most classical problem in Ramsey theory deals with cliques in 2 colors: namely, it asks us to find the \textit{Ramsey number} $R(K_s, K_t)$, the smallest number $n$ guaranteeing that any red-blue edge-coloring of $K_n$ must contain a red copy of $K_s$ or a blue copy of $K_t$.
Equivalently, one may ask for the largest $n$ such that there exists a red-blue edge-coloring of $K_{n-1}$ avoiding the ``forbidden" subgraphs $K_s$ in red and $K_r$ in blue. There are many generalizations of this problem, including to larger numbers of colors, and %
forbidden subgraphs other than cliques. A dynamic survey by Radziszowski \cite{rad_survey} tracks the state of knowledge in this area. \\

Here, we will focus on Ramsey numbers where the forbidden
subgraphs are \textit{wheels} or \textit{books}. The \textbf{wheel} $W_k$ consists of a cycle on $k-1$ vertices (``spokes''), together with a central vertex adjacent to all vertices of the cycle. The \textbf{book} $B_k$ consists of $k+2$ vertices, an edge $uv$ (the ``spine''), and $k$ vertices (``pages") each adjacent only to $u$ and $v$. See Figure~\ref{fig-WB} for an illustration.\\

\begin{figure}
\begin{center}
    \begin{minipage}{.25\textwidth}
    \begin{center}
        $W_6$\\\bigskip

       \includegraphics{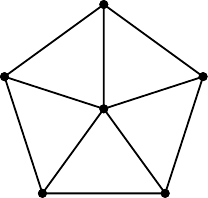}\mbox{}\\\bigskip
    \end{center}
   \end{minipage}\hspace{.07\textwidth}
   \begin{minipage}{.32\textwidth}
    \begin{center}
        \mbox{} $B_4$\\\bigskip

       \includegraphics{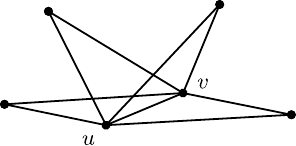}
    \end{center}
    \end{minipage}
\end{center}
\caption{Wheel $W_6$ and book $B_4$.}\label{fig-WB}
\end{figure}

There are a number of results on Ramsey numbers for books and wheels, summarized in \cite{rad_survey}. In particular, the first and third authors proved upper bounds for a variety of small Ramsey numbers, including for books and wheels, using the \textit{flag algebra method} \cite{LidP}; we establish most of the upper bounds here using the same method. The fourth author established a variety of lower bounds for graphs including wheels by explicit enumeration of \textit{circulant} and \textit{polycirculant} (or ``block-circulant") graphs~\cite{GOEDGEBEUR2022212}; here, we use the same technique to establish a number of new lower bounds for books. Other recent works have used reinforcement learning~\cite{RL_bounds} and SAT solvers~\cite{wesley} to give Ramsey lower bound constructions for books and wheels. 
\\

We also explore a generalization of Ramsey numbers
introduced by Erd\H{o}s and Shelah~\cite{MR0389669,MR0681869}.
In classical Ramsey problems, one is looking for a monochromatic copy of a graph $H$ in an $r$-edge-coloring of a graph $K_n$. 
The generalization asks for a copy of $H$ with at most $t$ colors for a fixed $t$.
We investigate the case where $H$ is a clique. 
Let $GR(r,K_s,t)$ denote the minimum number of vertices $n$ such that any $r$-edge-coloring of $K_n$ has a copy of $K_s$ with at most $t$ colors.
Observe
\[
GR(r,K_s,1) = R(\underbrace{K_s,\ldots,K_s}_{r}). 
\]
These generalized Ramsey numbers have been studied asymptotically; for some of the work see \cite{bal2024generalizedramseynumberscycles,gomezleos2024newboundsgeneralizedramsey,MR1645678,MR1656546,MR3872308}. The small values of $r,s,t$ for $GR(r,K_s,t)$ remained unexplored.

\section{Results}

We prove the following bounds; asterisks indicate bounds that are tight.\\

\textit{Note:} we have recently become aware of independent work establishing the same (tight) lower bound on $R(W_5, W_9)$ using an explicit construction~\cite{yanbo}. In~\cite{wesley2024}, some of the results below on Ramsey numbers for books are also proven, and there is also a generalization of the second case of Lemma~\ref{lem:bookslemmma} to an infinite family.

\begin{table}[H]
\begin{center}
    \begin{tabular}{l|ll|ll}
             & Lower (new) & Upper (new) & Lower (old)\ \ \  & Upper (old) \\ \hline
    $R(W_5,W_7)$ &             & \textit{15*} & 15 \cite{wesley}              & 16 \cite{LidP}         \\
    $R(W_5,W_9)$ & \textit{18*}               & \textit{18*} &    17 \cite{rad_survey}           &        \\
    \hline
    $R(B_2,B_8)$ & \textit{21}*               & \textit{21}*  & 19 \cite{RS78}                  &  22 \cite{b2_uppers}\\
    $R(B_2,B_9)$ & 22               & & 21 \cite{RS78}                  &  24 \cite{b2_uppers}\\
    $R(B_2,B_{10})$ & 25               & & 23 \cite{RS78}                  &  26 \cite{b2_uppers}\\
    $R(B_3,B_7)$ & $\textit{20}^\star$              & $\textit{20}^\star$ &            &          \\
    $R(B_4,B_7)$ & 22               & 23 &            &          \\
    $R(B_5,B_6)$ & \textit{23*}               & &                  & 23 \cite{RS78}           \\
    $R(B_6,B_7)$ & \textit{27*}               & &                  & 27 \cite{RS78}    \\
    $R(B_6,B_8)$ & \textit{29*}               & &             &  29 \cite{RS78}   \\
    $R(B_7,B_8)$ & \textit{31*}               & &               & 31 \cite{RS78}     \\
    $R(B_8,B_8)$ & \textit{33*}               & &          & 33 \cite{RS78}   \\
    \hline    
    $GR(3,K_4,2)$ & \textit{10*} & \textit{10*} & & \\
    $GR(3,K_5,2)$ & 20 & 23 & & \\
    $GR(3,K_6,2)$ & 32 & 54 & & \\
    $GR(4,K_4,2)$ & 15 & 17 & & \\
    $GR(4,K_4,3)$ & \textit{10*} & \textit{10*} & &     
    \end{tabular}
\end{center}
\caption{New upper and lower bounds on Ramsey numbers.}\label{table1}
\end{table}

We also have the following lemma covering the diagonal and almost-diagonal cases for some additional sizes of books. Note that some of these bounds are also included in the table above for readers' convenience.
\begin{lemma}\label{lem:bookslemmma}
    For all \(4 \leq n \leq 21 \) we have
    \begin{itemize}
        \item \(4n-3\leq R(B_{n-2},B_n) \)
        \item \(R(B_{n-1},B_{n}) = 4n-1 \)
        \item \(4n+1 \leq R(B_{n},B_{n}) \leq 4n+2 \)
    \end{itemize}
    Note that it was also already known that \(R(B_{n-2},B_{n})\leq 4n-3 \) if $n\equiv 2 \pmod 3$, and that \(R(B_{n},B_{n}) \leq 4n-1 \) if $4n+1$ is not the sum of two squares.
\end{lemma}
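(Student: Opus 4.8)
The plan is to treat the lemma as a collection of lower-bound constructions paired with upper-bound arguments, and to observe that each of the three items is governed by essentially one construction and one counting argument, applied over the stated range $4 \le n \le 21$. For the lower bounds, I would exhibit, for each $n$ in range, a red-blue coloring of $K_{4n-4}$ (resp.\ $K_{4n-2}$, $K_{4n}$) with no red $B_{n-2}$ (resp.\ $B_{n-1}$, $B_n$) and no blue $B_n$. The natural source of such colorings is the family of circulant and polycirculant graphs used by the fourth author in~\cite{GOEDGEBEUR2022212}: the key point is that a book $B_k$ appears in red exactly when some edge $uv$ is red and $u,v$ have at least $k$ common red-neighbors, so avoiding $B_k$ in a color class amounts to a bound on codegrees within that color. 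I would therefore search the polycirculant graphs on the relevant number of vertices for one whose red graph has maximum edge-codegree $\le n-3$ and whose blue graph (the complement) has maximum edge-codegree $\le n-1$; for the middle item the two constructions must be simultaneously extremal, which is why equality holds there. These searches are finite and were carried out by computer, so in the write-up I would present the resulting circulant connection sets explicitly (or point to the ancillary data) and verify the codegree conditions, noting that the pattern is uniform in $n$ so a single parametrized family covers the whole range.

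For the upper bounds, the cleanest route is a degree/codegree counting argument in the style of the classical book Ramsey bounds (Rousseau--Sheehan~\cite{RS78} and the $B_2$ bounds of~\cite{b2_uppers}). Suppose $K_N$ is 2-colored with no red $B_{n-1}$ and no blue $B_n$, where $N = 4n-1$; I want a contradiction. Fix any vertex $v$; its red-degree $d_r$ and blue-degree $d_b$ satisfy $d_r + d_b = N-1 = 4n-2$. Inside the red neighborhood of $v$, no red edge can have $n-2$ further common red neighbors with $v$ (else red $B_{n-1}$ through the spine would appear), which forces the red graph restricted to $N_r(v)$ to be sparse; symmetrically for blue. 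Converting these local constraints into a global edge count over all choices of $v$ and the spine edge, and comparing with the total number of edges of each color, yields the bound $N \le 4n-2$, contradiction; the same scheme with the appropriate shift in parameters gives $R(B_{n-2},B_n) \le 4n-3$ under $n \equiv 2 \pmod 3$ and $R(B_n,B_n) \le 4n+2$ in general (with the improvement to $4n+1$ exactly when $4n+1$ is not a sum of two squares, which is where a strongly-regular-graph obstruction enters — a red graph meeting the codegree bound with equality on $4n$ vertices would be a conference-type strongly regular graph, and such a graph forces $4n+1$ to be a sum of two squares by the integrality/Bruck--Ryser-type conditions).

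I expect the main obstacle to be the upper bound $R(B_{n-1},B_n) \le 4n-1$ holding for \emph{every} $n$ in the range rather than just asymptotically or under a congruence condition: the generic counting argument tends to leave a slack of one or two vertices, and closing that gap requires a finer case analysis of the extremal configurations (the red graph restricted to a color neighborhood being forced into a specific near-regular structure, which then either embeds a forbidden book or contradicts a parity/divisibility constraint). In practice I would handle the small values $n \le 21$ partly by the uniform counting argument and partly by direct computer verification that no 2-coloring of $K_{4n-1}$ avoids both books — an exhaustive or SAT-based check of the relevant size, as in~\cite{wesley,wesley2024} — and then package the two halves together. The lower-bound side I do not expect to be hard once the right circulant family is identified; the risk there is purely computational (ensuring the search over connection sets is complete for each $n$), and the finiteness of the range $4 \le n \le 21$ makes this manageable.
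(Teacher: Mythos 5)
Your plan coincides with the paper's proof on the only genuinely new content of the lemma: the lower bounds are established exactly as you describe, by a computer search over $2$-polycirculant graphs for each $4 \le n \le 21$, checking the codegree conditions (every red edge has at most $k-1$ common red neighbors to avoid a red $B_k$, and symmetrically in blue). For the upper bounds the paper does not reprove anything: all of them --- $R(B_{n-1},B_n)\le 4n-1$, $R(B_n,B_n)\le 4n+2$, and the two conditional refinements --- are simply quoted from Rousseau--Sheehan~\cite{RS78}. Your sketch of how those bounds arise (a Goodman-type count of monochromatic triangles against the codegree caps forced by the forbidden books, plus a conference/strongly-regular-graph obstruction yielding the sum-of-two-squares condition) is an accurate description of the arguments in~\cite{RS78}, but your concern that $R(B_{n-1},B_n)\le 4n-1$ might leave a slack of a vertex or two and require SAT or exhaustive verification for each $n\le 21$ is unfounded: it is a general theorem of~\cite{RS78} valid for all $n$, so no computation is needed on the upper-bound side. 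In short, the proposal is correct and essentially matches the paper; the only divergence is that you propose to re-derive classical results that the paper merely cites, at the cost of some unnecessary hedging about whether those derivations close.
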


\section{Method}

Each of the lower bounds in the table above is established by a construction provided in \Cref{appendix}. The constructions were found using tabu search or as polycirculant graphs, and verified independently in SageMath~\cite{sagemath}; the code for books and wheels is available at \url{https://github.com/gwen-mckinley/ramsey-books-wheels}. The additional lower bounds given in \Cref{lem:bookslemmma} were also established by polycirculant graph constructions, which are available in the same repository. The majority of the upper bounds were found by semidefinite programming using the flag algebra method, as described in \cite{LidP}. The certificates for the upper bounds are available at
\url{http://lidicky.name/pub/gr}. And several upper bounds are from ``bottom-up generation," as described below.\\

Here is more detail on the structure of some lower bound constructions for generalized Ramsey numbers: for \(GR(3,K_4,2)\), one extremal coloring is composed of the Paley graph on 9 vertices in the first color, and the other two colors are 3 triangles each. This is the only coloring with more than 6 symmetries. For \(GR(4,K_4,3)\), the only maximal graph is isomorphic to $3 K_3$ in every color. Joining two arbitrary colors, we find the graph described for \(GR(3,K_4,2)\). One example for \(GR(3,K_5,2)\) is the coloring composed of the (isomorphic) circulant graphs $C(19,[1,7,8])$, $C(19,[2,3,5])$ and $C(19,[4,6,9])$.

\subsection{Bottom-up generation}
For small Ramsey numbers, it is possible to generate all Ramsey graphs by starting from a trivial case (for example the graph on one vertex) and repeatedly adding an extra vertex in every possible way without creating a subgraph isomorphic to one of the avoided graphs.
For two-color Ramsey numbers this can be achieved efficiently with the program \textit{geng} from the \textit{nauty} package written by Brendan McKay~\cite{MCKAY201494}. We wrote a custom plugin for generating Ramsey graphs avoiding wheels and books.
This allowed us to find the values of \(R(W_5,W_7)\) and \(R(W_5,W_9)\) in 90s and 36 CPU-hours respectively. The computation of \(R(B_2,B_8)\) was significantly heavier and required approximately 2 years of CPU time. The counts are presented in \Cref{tableCounts}. To test the latter program, we also generated all \(R(B_2,B_6)\)-graphs (which requires only a trivial modification to the program) and verified that the results were identical to those in~\cite{BlackLevenRadz}.\\

For the generalized Ramsey numbers, multiple colors are needed, which is not directly supported by \textit{nauty}. We therefore used a simple implementation in SageMath~\cite{sagemath} to do the generation. Both  $GR(3,K_4,2)$ and $GR(4,K_4,3)$ could be solved in less than one minute in this way.

\subsection{Flag Algebras}
The new upper bounds (except for \(R(B_2,B_8)\)) were obtained (independently) using the flag algebra method. This method was developed by Razborov~\cite{MR2371204}, and the main idea is to find a certificate for the bound as a sum of squares,
which can be done efficiently using semidefinite programming. 
We used CSDP~\cite{csdp} and MOSEK~\cite{mosek} solvers in this work. 
While the solvers produce numerical results, we created tools to change the solution to an exact arithmetic one in SageMath~\cite{sagemath}. 
The flag algebra method is quite general, and it is naturally applicable to problems where one is interested in asymptotic results.
For example, the Tur\'an density of small hypergraphs  and its variants~\cite{l2norm} is a group of problems where flag algebras led to many new improvements. \\

The Ramsey problems in this paper are not asymptotic, so the method does not apply directly.
Instead, we consider a particular blow-up of the Ramsey graph to transfer
the problem from a small finite setting into an asymptotic one. 
This approach was developed by the first and third author~\cite{LidP}.
In this work we utilize the machinery developed there to obtain more upper bounds.

\subsection{Polycirculant graphs}\label{polycirc}
$k$-polycirculant graphs are graphs of order \(n\) that have a non-trivial automorphism \(\alpha\) such that all vertex orbits under \(\alpha\) have the same size \(n/k\). For a large amount of Ramsey numbers, there exist extremal graphs that are polycirculant. One might also believe that if an extremal graph is unique, it is more likely to have some symmetries.
For example, the unique Ramsey-\((B_2,B_8)\)-graph on 20 vertices has 240 automorphisms and is 4-polycirculant.\\

We used the exhaustive algorithm outlined in~\cite{GOEDGEBEUR2022212} and adapted it slightly to generate all polycirculant Ramsey graphs avoiding books. With this generator, it is for example easy to show that there are exactly seven 2-polycirculant Ramsey-\((B_2,B_9)\)-graphs on 20 vertices and exactly one 3-polycirculant Ramsey-\((B_2,B_{10})\)-graph on 24 vertices (up to isomorphism).

\begin{proof}[Proof of Lemma~\ref{lem:bookslemmma}]
    The upper bounds were proven in \cite{RS78}. The lower bounds are witnessed by 2-polycirculant graphs (as was in fact one of the examples in \cite{RS78}), which are available here: \url{https://github.com/gwen-mckinley/ramsey-books-wheels}
\end{proof}

Note: the polycirculant graph constructions we found all have the extra property that the first circulant block (the subgraph induced by the first vertex-orbit of the 2-polycirculant automorphism) is isomorphic to the complement of the second circulant block.
For most cases there were many 2-polycirculant Ramsey graphs with this property. For example: there are 581 non-isomorphic 2-polycirculant \((B_{12},B_{13})\)-graphs on 50 vertices.
Most of these graphs had no extra automorphisms besides the ones implied by being 2-polycirculant.
This makes it hard to deduce a pattern in these graphs that could generalize to an infinite family.

\subsection{Tabu search}

\subsubsection{Tabu search details}

To prove a Ramsey lower bound $n$, our goal is to construct an edge-coloring of $K_{n-1}$ avoiding forbidden monochromatic subgraphs (or, in the case of the generalized Ramsey numbers, forbidden subgraphs with too few edge colors). For simplicity, we will restrict the discussion here to the traditional 2-color case, but the ideas are exactly the same for the generalized problem.\\

In this setting, tabu search works roughly as follows: we start with a random 2-edge-coloring of $K_{n-1}$ (equivalently, a random graph on $n-1$ vertices), and count the number of monochromatic substructures we are trying to avoid -- this is the ``score'' of the graph. Then at each step, we re-color one edge, choosing whichever would produce the lowest-scoring graph, while rejecting any choice that would produce a graph already visited. This restriction is enforced by use of a ``tabu set,'' which may consist either of forbidden moves (edges recently edited) or of whole graphs/colorings already visited. Here we take the second approach; more details and discussion are given below.\\

There are many variants of tabu search used in practice, but the one implemented here is very simple, and has two main features:
\begin{enumerate}
    \item We do not restrict the length of the tabu -- i.e., we reject any moves that would produce a graph already visited at \textit{any} point during the search. 
    \item We allow the search to run indefinitely, and never restart at a random graph. Instead, we run several instances of the search in parallel, starting from independently sampled random graphs.

\end{enumerate}

This approach has two main advantages: first, because this algorithm has no hyperparameters to tune, it is simple and usable ``out of the box'' for new problems. Second, by allowing the search to run indefinitely, we avoid inadvertently restarting too quickly. Empirically, it seems that the number of steps required to find a successful construction grows quickly with the problem parameters, and it is not immediately obvious how to predict this value. If we ``bid low'' and restart the search too quickly, we may consistently cut it short before it is able to reach a promising area of the search space. On the other hand, we observed no clear empirical disadvantage in allowing the search to continue\footnote{There are a number of open-source hyperparameter tuning tools supporting this kind of experimentation; we used \href{https://optuna.org/}{Optuna} here.}. Indeed, under some conditions, local search algorithms provably achieve optimal speedup via parallelization (as opposed to restarts); this is dependent on the runtime distribution of the particular algorithm. See Section 4.4 and Chapter 6 of \cite{SLS_book} for further discussion and related empirical results.\\

We now return to the details of the choice of tabu set. Most variants of tabu search used in the literature store only a list of recent moves (e.g., edges recently edited), rather than whole graphs/configurations, due in large part to memory constraints \cite{tabu_chapter, SLS_book}. However, if only moves are stored, it is important to limit the length of the tabu set -- otherwise all possible moves will quickly be forbidden. 
On the other hand, storing a full representation of each graph visited (and checking the result of each possible move against this set) is very computationally expensive. To avoid this issue, we instead store the graphs by hashing. This offers a much lower computational cost, while still allowing us to avoid all graphs previously visited. (It should be noted, however, that hash collisions may cause us to avoid some other graphs as well -- we did not observe issues with this in practice, but it may be worth further study.) This general strategy has been seen very occasionally in the literature \cite{hash}; but again, it is more common to store a tabu set of recent moves, in which case the length of the tabu is an important hyperparameter, which must be tuned correctly or dynamically updated for good performance.

\subsubsection{Scoring functions}

Also worth noting: an important ingredient in any local search algorithm is an efficient ``scoring" function; here, this means counting the number of 
``forbidden" substructures in a graph. In fact, rather than re-scoring the entire graph at each step, it is generally more efficient to compute the \textit{change} in score at each step, since most of the graph will be unaffected by editing a single edge.\\ 

For books $B_k$, both operations are quite efficient, and the cost is basically independent of $k$ for $k\geq 2$. Explicitly, choosing a copy of $B_k$ in a graph $G$ reduces to first choosing a ``spine'' $(u,v)\in E(G)$, then choosing any $k$ vertices from the common neighborhood $N(u)\cap N(v)$ to serve as the ``pages''; see Figure~\ref{fig-book}.

\begin{figure}[H]
\begin{center}
    \includegraphics{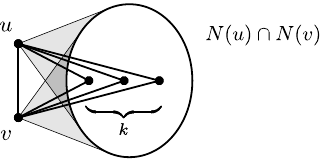}
\end{center}
\caption{Book $B_k$ in a graph}\label{fig-book}
\end{figure}

Notice that for the purpose of counting, we do not need to list all such sets of $k$ vertices; we just need to compute the binomial coefficient $\binom{|N(u)\cap N(v)|}{k}$. Given the value $|N(u)\cap N(v)|$, this operation is very efficient (certainly $O(v(G))$), and in practice, all relevant values of the binomial coefficient can simply be stored in a lookup table. Using similar observations, we can also efficiently compute the ``score change'' for one step in the search (i.e., the change in the number of books from re-coloring an edge), though the details of the implementation are more involved.\\

For wheels $W_k$, we don't exploit any especially ``nice tricks,'' and the cost of our scoring and score change functions are $\Omega(n^k)$ and $\Omega(n^{k-2})$ respectively -- i.e., roughly the cost of enumerating over all subgraphs of the relevant size (where $n$ is the number of vertices in the host graph $G$). Note, however, that significant improvement may be possible; counting wheels reduces largely to counting cycles, where surprisingly efficient algorithms are known \cite{cycle_count}.\\

For generalized Ramsey numbers $GR(r, K_s, t)$, the situation is slightly different; here, we need to count copies of $K_s$ with at most $t$ colors (in an $r$-edge-colored complete graph $G$). Because clique counting is a well-studied problem in non-edge-colored graphs, we use the following strategy: (1) reduce to clique counting in a collection of non-edge-colored graphs, then (2) apply the ``Pivoter" algorithm for fast clique-counting \cite{pivoting} to each graph in the collection. This algorithm also facilitates counting cliques that contain a particular edge, which is useful for our ``score change" computations. (Note: our code implementing this strategy is a rough prototype, so we have not made it publicly available, but it requires relatively minimal adaptations of the code for books and wheels. There is also an open-source implementation of the Pivoter clique-counting algorithm available online \cite{pivoter_code}.)\\

In slightly more detail, we define the scoring function for the generalized Ramsey problem as follows: for each possible set $\mathcal{C}$ of $t$ colors, we construct the (non-edge-colored) graph $G_\mathcal{C}$ consisting of edges whose colors in the original coloring $G$ belong to $\mathcal{C}$. Then we take the ``score" of $G$ to be the sum of the number of cliques $K_s$ in each of these graphs, that is, $\sum_{\mathcal{C}\in\binom{[r]}{t}} (\text{\# copies of $K_s$ in $G_\mathcal{C}$})$. Notice that if a copy of $K_s$ in $G$ uses strictly \textit{fewer} than $t$ colors, it will be counted multiple times by this scoring function, once for each set $\mathcal{C}$ containing its colors. This could be avoided, for example by using inclusion-exclusion, but at a higher computational cost. Also, it is reasonable and perhaps even desirable that this scoring function more heavily penalizes copies of $K_s$ using smaller numbers of colors -- that is, copies that are further from satisfying the desired constraint.

\section{Further directions}

\subsection{Upper bounds}

In the flag algebras applications, we used the most straightforward version of the approach introduced in \cite{LidP}, which generalizes easily. But when calculating an upper bound for a particular Ramsey number, it is also possible to incorporate more constraints. In particular, one could try to include constraints on smaller graphs implied by smaller Ramsey numbers. 
For example when bounding $R(5,5)$, one may use $R(4,5)=25$ when looking at neighborhoods of a fixed vertex.
This approach has been explored by another researcher~\cite{volec}, but it requires considerably more effort. 
In a more distant future it may also be possible to calculate larger instances as computers become faster. 

\subsection{Lower bounds}

\subsubsection{Poycirculant graphs}

It is interesting to ask whether \Cref{lem:bookslemmma} can be extended to larger values of $n$. We proved this lemma by starting a new search for polycirculant graphs for each $n$. There was no indication that the pattern would stop at $n\geq 22$; the generation simply started to take a lot of time. It seems plausible that there is some unified family of graphs giving this bound for all $n$. But again, as noted in \Cref{polycirc}, most of the graphs have no additional symmetry beyond being polycirculant, making it difficult to discern any larger pattern. Note that a partial answer to this question was given in~\cite{wesley2024} where the lemma is generalized to an infinite family consisting of orders that are twice a prime power (that is 1 modulo 4).

\subsubsection{Tabu search}

To extend the tabu search approach used in this work for finding lower bounds, there are a number of natural directions to explore. Perhaps most obvious would be to implement everything in a lower-level language -- all the local search code for wheels and books is currently written in Python. In particular, the score-change functions dominate the total cost, and speeding them up would almost certainly improve performance; relevant here is an extensive literature on subgraph counting (see \cite{subgraph_survey}), and it may be possible to simply incorporate existing open-source solutions.\\

Another small but potentially effective change could be to incorporate weights in the scoring function -- that is, to penalize copies of one forbidden subgraph more than another, depending on their respective sizes and/or how recently they have been edited. This approach has often been used for Ramsey lower bounds, but there appears to be no clear theoretical justification for any particular choice of weights, and some works have even arrived at contradictory conclusions \cite{tankers, exoo13}. This question has been studied more extensively in the SAT literature \cite{clauseWeights,clauseWeightsInitial}, and it is possible some of that work could be fruitfully adapted here.\\

Last but not least, one could try to be more strategic in choosing where to start a local search. One common approach is to build up from smaller graphs: we first find a construction on a smaller number of vertices avoiding ``bad" subgraphs, then add a vertex, and run our local search starting from this new graph (as opposed to starting from a random graph) \cite{whatsR55, exoo4_6}. The hope is that by doing this, we direct the search to a promising area of the (very large) search space. We also tried this method, with limited success, but it is likely worth further exploration. 

\section*{Acknowledgments}
The second author would like to thank David Hemminger and Larry Wilson for many helpful and stimulating discussions, and for their consistent encouragement.
This work used the computing resources at the Center for Computational Mathematics, University of Colorado Denver, including the Alderaan cluster, supported by the National Science Foundation award OAC-2019089.  We also used the Stevin computing infrastructure provided by the VSC (Flemish Supercomputer Center), funded by the Research Foundation Flanders (FWO) and the Flemish Government.

\printbibliography

\newpage
\appendix

\section{Counts and constructions}\label{appendix} 

\begin{table}[H]
\begin{center}
    \begin{tabular}{l|c|cc|cc}
    $n$ & $R(B_2,B_8)$ & $R(W_5,W_7)$ & $R(W_5,W_9)$ & $GR(3,K_4,2)$ & $GR(4,K_4,3)$
    \\ \hline
    1 & 1 & 1 & 1 & 1 & 1 \\
    2 & 2 & 2 & 2 & 1 & 1 \\
    3 & 4 & 4 & 4 & 3 & 3 \\
    4 & 9 & 11 & 11 & 9 & 7 \\
    5 & 22 & 31 & 31 & 34 & 11 \\
    6 & 69 & 130 & 130 & 154 & 12 \\
    7 & 255 & 675 & 723 & 428 & 1 \\
    8 & 1301 & 4868 & 6456 & 556 & 1 \\
    9 & 9297 & 38059 & 87977 & 263 & 1 \\
    10 & 96618 & 251377 & 1627532 & 0 & 0 \\
    11 & 1405777 & 878658 & 27891376 &  &  \\
    12 & 25330324 & 932411 & 250459368 &  &  \\
    13 & 443322144 & 141871 & 509767930 &  &  \\
    14 & 5130980404 & 1161 & 139131233 &  &  \\
    15 & ? & 0 & 46736023 &  &  \\
    16 & ? &  & 18956337 &  &  \\
    17 & ? &  & 272891 &  &  \\
    18 & ? &  & 0 &  &  \\
    19 & 41 &  &  &  &  \\
    20 & 1 &  &  &  &  \\
    21 & 0 &  &  &  &  \\
    \end{tabular}
\end{center}
\caption{Counts for Ramsey graph. The multicolor numbers are given up to color-swapping isomorphism. Because the calculations for $R(B_2,B_8)$ had to be split up into multiple processes, we have no accurate counts for some orders.}\label{tableCounts}
\end{table}

\begin{itemize}
\vbox{
    \item $R(W_5,W_7)\geq 15$.\\
    Graph6-string of a 14-vertex graph with no $W_5$ as a subgraph and no $W_7$ in the complement:

\begin{lstlisting}
Mav?Hwu]`ySZpyyg?
\end{lstlisting}
    
}

\vbox{
    \item $R(W_5,W_9)\geq 18$.\\
    Graph6-string of a 17-vertex graph with no $W_5$ as a subgraph and no $W_9$ in the complement:

\begin{center}
\begin{lstlisting}
PIL{eMI^Jqp[gXkp_|zxOaww
\end{lstlisting}
\end{center}    
}

\vbox{
    \item $R(B_2, B_8) \geq 21.$\\
        Graph6-string of a 20-vertex graph with no $B_2$ as a subgraph and no $B_8$ in the complement:

    \begin{lstlisting}
SXgcISrdSaJQBJs_jp@CWFOV?q}HWOPbc
    \end{lstlisting}
}

\vbox{
    \item $R(B_2, B_9) \geq 22.$\\
        Graph6-string of a 21-vertex graph with no $B_2$ as a subgraph and no $B_9$ in the complement:    
    \begin{lstlisting}
TXhJ?ScLQoHAO]EhcLe_G_nAEXuiBSnW?]?w 
    \end{lstlisting}
}

\vbox{
    \item $R(B_2, B_{10}) \geq 25.$\\
    Graph6-string of a 24-vertex graph with no $B_2$ as a subgraph and no $B_{10}$ in the complement:

\begin{center}
\begin{lstlisting}
W?bFFbw^@{BwgDsAl?lg@U_cl@GlDGUacDih?lTKApSgDqh
\end{lstlisting}
\end{center}
}

\vbox{
    \item $R(B_3, B_6) \geq 19$.\\
    Graph6-string of a 18-vertex graph with no $B_3$ as a subgraph and no $B_{6}$ in the complement:

    \begin{lstlisting}    
QG]cql@_GTeAwhrAVeEaiHv?Sn?
    \end{lstlisting}
}

\vbox{
    \item $R(B_3, B_7) \geq 20$.\\
        Graph6-string of a 19-vertex graph with no $B_3$ as a subgraph and no $B_{7}$ in the complement:
    \begin{lstlisting} 
REf`OcBMI@ozZSyaMGil?ABm_o|jOO
    \end{lstlisting}
}

\vbox{
    \item $R(B_4,B_5) \geq 19$.\\
        Graph6-string of a 18-vertex graph with no $B_4$ as a subgraph and no $B_{5}$ in the complement:
    \begin{lstlisting}
QYMOYLbMIt\GTS_Mtp]_YAtuuoO
    \end{lstlisting}
}

\vbox{
    \item $R(B_4,B_7) \geq 22$.\\
        Graph6-string of a 21-vertex graph with no $B_4$ as a subgraph and no $B_{7}$ in the complement:

     \begin{lstlisting}
TqmoKUaoq\bAK|]oMgORPWJYMCxGye`EmTcY 
    \end{lstlisting}
}

\vbox{
    \item $R(B_5,B_6) \geq 23$.\\
    Graph6-string of a 22-vertex graph with no $B_5$ as a subgraph and no $B_{6}$ in the complement:

     \begin{lstlisting}
U_Ya{gHmOv}QfaSGkhXLXoN]krJqbE^?dvdCkHso     
    \end{lstlisting}
}

\vbox{
    \item $R(B_5,B_7) \geq 25$.\\
    Graph6-string of a 24-vertex graph with no $B_5$ as a subgraph and no $B_{7}$ in the complement:

     \begin{lstlisting}
 W|teicY@kY[EQsKWEJqIIde]`^BZr?zwhGnwaCv`LUHF_UJ
    \end{lstlisting}
}

\vbox{
    \item $R(B_6,B_7) \geq 27$.\\
    Graph6-string of a 26-vertex graph with no $B_6$ as a subgraph and no $B_{7}$ in the complement:
    
 \begin{lstlisting}
 YMQcwl`gEBbKaZK{SbPhN~BNnaA\Q_YyVQ{A]uwEoemTUhkBdkk\T@Y_ 
\end{lstlisting}
}

\vbox{
\item $R(B_6, B_8) \geq 28$.\\
    Graph6-string of a 27-vertex graph with no $B_6$ as a subgraph and no $B_{8}$ in the complement:
    
 \begin{lstlisting}
 ZOp~T_WyXbZ`IjWLOfpZcFgDurCaMkcd]v`gpyHaEFdjjWLiIQtHB]QiZbIG
\end{lstlisting}

}

\vbox{
    \item $R(B_7,B_8) \geq 31$.\\
    Graph6-string of a 30-vertex graph with no $B_7$ as a subgraph and no $B_{8}$ in the complement:
    
\begin{center}
\begin{lstlisting}
]UWsqWecqRCeceqRKcsDjoUn_lJ_lLoUd[Dxj_jMmAkl[FXl[BXUmDkdjbZGl[ZXAtplcDjrZG
\end{lstlisting}
\end{center}
}

\vbox{
    \item $R(B_8,B_8) \geq 33$.\\
    Graph6-string of a 32-vertex graph with no $B_8$ as a subgraph and no $B_{8}$ in the complement:
    
\begin{center}
\begin{lstlisting}
_Uzrpy]RpNQ]q]xN]Rrq]`DnAJ^AJJ`DewPXvAHJ[GsuwPhUwRhJ[JsavB|KUwNhPZa}cavD|GavD|GPZq}c
\end{lstlisting}
\end{center}
}

\vbox{
    \item $GR(3,K_4,2) = 10$.\\
    Adjacency matrix of a 3-edge-colored 9-vertex graph with each $K_4$ containing more than 2 colors.
There are 263 such colorings (up to color permutations).
    \begin{center}
\begin{verbatim}
[[0,2,3,3,3,1,1,1,2]
 [2,0,1,1,2,1,3,3,2]
 [3,1,0,2,3,1,3,2,1]
 [3,1,2,0,1,3,1,2,3]
 [3,2,3,1,0,2,2,3,1]
 [1,1,1,3,2,0,2,3,3]
 [1,3,3,1,2,2,0,1,3]
 [1,3,2,2,3,3,1,0,1]
 [2,2,1,3,1,3,3,1,0]]
    \end{verbatim}\end{center}
}

\vbox{
    \item $GR(4,K_4,2) \geq 15$\\
     Adjacency matrix of a 4-edge-colored 14-vertex graph with each $K_4$ containing more than 2 colors.   
     \begin{center}
\begin{verbatim}
[[0,4,3,1,2,3,4,3,2,2,4,1,4,1]
 [4,0,2,4,1,1,1,3,4,3,2,2,3,1]
 [3,2,0,2,2,3,1,1,4,1,3,3,4,4]
 [1,4,2,0,4,2,2,3,1,3,4,3,1,2]
 [2,1,2,4,0,4,3,4,2,1,1,3,2,1]
 [3,1,3,2,4,0,1,4,1,2,2,4,1,3]
 [4,1,1,2,3,1,0,4,3,2,3,3,4,2]
 [3,3,1,3,4,4,4,0,2,2,1,2,2,1]
 [2,4,4,1,2,1,3,2,0,4,1,1,3,3]
 [2,3,1,3,1,2,2,2,4,0,3,1,1,4]
 [4,2,3,4,1,2,3,1,1,3,0,4,2,3]
 [1,2,3,3,3,4,3,2,1,1,4,0,4,1]
 [4,3,4,1,2,1,4,2,3,1,2,4,0,2]
 [1,1,4,2,1,3,2,1,3,4,3,1,2,0]] 
\end{verbatim}
\end{center}
}

\vbox{
    \item $GR(4,K_4,3) = 10$\\
    Adjacency matrix of a 4-edge-colored 9-vertex graph with each $K_4$ containing more than 3 colors. There is only one such graph on 9 vertices.
\begin{center}
\begin{verbatim}
[[0,2,3,3,1,4,2,1,4]
 [2,0,1,4,3,3,2,4,1]
 [3,1,0,3,4,2,4,2,1]
 [3,4,3,0,2,1,1,4,2]
 [1,3,4,2,0,3,4,1,2]
 [4,3,2,1,3,0,1,2,4]
 [2,2,4,1,4,1,0,3,3]
 [1,4,2,4,1,2,3,0,3]
 [4,1,1,2,2,4,3,3,0]]
\end{verbatim}
\end{center}
}

\vbox{
\item $GR(3,K_5,2) \geq 20$\\
    Adjacency matrix of a 3-edge-colored 19-vertex graph with each $K_5$ containing more than 2 colors.
\begin{verbatim}
[[0,1,2,2,3,2,3,1,1,3,3,1,1,3,2,3,2,2,1],
 [1,0,1,2,2,3,2,3,1,1,3,3,1,1,3,2,3,2,2],
 [2,1,0,1,2,2,3,2,3,1,1,3,3,1,1,3,2,3,2],
 [2,2,1,0,1,2,2,3,2,3,1,1,3,3,1,1,3,2,3],
 [3,2,2,1,0,1,2,2,3,2,3,1,1,3,3,1,1,3,2],
 [2,3,2,2,1,0,1,2,2,3,2,3,1,1,3,3,1,1,3],
 [3,2,3,2,2,1,0,1,2,2,3,2,3,1,1,3,3,1,1],
 [1,3,2,3,2,2,1,0,1,2,2,3,2,3,1,1,3,3,1],
 [1,1,3,2,3,2,2,1,0,1,2,2,3,2,3,1,1,3,3],
 [3,1,1,3,2,3,2,2,1,0,1,2,2,3,2,3,1,1,3],
 [3,3,1,1,3,2,3,2,2,1,0,1,2,2,3,2,3,1,1],
 [1,3,3,1,1,3,2,3,2,2,1,0,1,2,2,3,2,3,1],
 [1,1,3,3,1,1,3,2,3,2,2,1,0,1,2,2,3,2,3],
 [3,1,1,3,3,1,1,3,2,3,2,2,1,0,1,2,2,3,2],
 [2,3,1,1,3,3,1,1,3,2,3,2,2,1,0,1,2,2,3],
 [3,2,3,1,1,3,3,1,1,3,2,3,2,2,1,0,1,2,2],
 [2,3,2,3,1,1,3,3,1,1,3,2,3,2,2,1,0,1,2],
 [2,2,3,2,3,1,1,3,3,1,1,3,2,3,2,2,1,0,1],
 [1,2,2,3,2,3,1,1,3,3,1,1,3,2,3,2,2,1,0]]
 \end{verbatim}
 }

\vbox{
\item $GR(3,K_6,2) \geq 32$\\
    Adjacency matrix of a 3-edge-colored 31-vertex graph with each $K_6$ containing more than 2 colors.
\begin{center}    
\begin{verbatim}
[[0,3,3,3,3,2,1,2,1,1,1,1,3,1,3,2,2,2,3,3,1,3,2,1,2,2,3,3,2,1,1]
 [3,0,2,1,3,2,3,2,2,1,2,2,2,1,1,1,2,2,2,2,1,3,3,3,1,1,1,1,3,3,2]
 [3,2,0,2,1,1,3,3,1,3,2,2,3,3,2,2,1,1,3,1,3,2,1,3,3,1,1,1,1,2,2]
 [3,1,2,0,1,3,1,3,3,1,1,2,3,1,2,3,1,1,1,3,1,2,2,3,2,2,3,3,3,2,1]
 [3,3,1,1,0,1,1,2,1,1,2,1,1,2,3,2,1,2,3,2,2,3,2,2,1,2,1,3,3,3,2]
 [2,2,1,3,1,0,1,2,2,3,1,3,2,2,3,3,1,3,2,2,1,3,2,3,1,1,1,3,1,1,3]
 [1,3,3,1,1,1,0,1,1,2,3,1,3,3,2,2,3,1,1,3,2,2,3,1,1,3,2,2,1,3,1]
 [2,2,3,3,2,2,1,0,2,3,2,3,3,3,1,1,2,2,1,2,3,1,1,3,1,1,3,2,3,2,1]
 [1,2,1,3,1,2,1,2,0,2,2,1,2,2,1,3,1,2,2,3,2,1,2,1,3,2,3,1,1,3,3]
 [1,1,3,1,1,3,2,3,2,0,2,2,3,1,3,1,1,1,1,1,2,1,3,3,3,2,2,2,3,2,1]
 [1,2,2,1,2,1,3,2,2,2,0,2,2,2,2,2,3,2,3,3,1,2,3,1,1,3,1,3,2,1,2]
 [1,2,2,2,1,3,1,3,1,2,2,0,1,3,3,3,1,3,1,2,3,1,1,1,1,2,1,2,3,2,3]
 [3,2,3,3,1,2,3,3,2,3,2,1,0,3,1,2,1,1,2,1,3,1,1,3,1,2,1,1,2,2,2]
 [1,1,3,1,2,2,3,3,2,1,2,3,3,0,1,1,1,3,2,2,3,2,2,2,2,1,1,1,3,1,3]
 [3,1,2,2,3,3,2,1,1,3,2,3,1,1,0,1,2,3,3,1,2,2,1,2,3,3,2,1,1,3,3]
 [2,1,2,3,2,3,2,1,3,1,2,3,2,1,1,0,3,3,3,3,1,1,3,3,1,1,3,2,2,3,2]
 [2,2,1,1,1,1,3,2,1,1,3,1,1,1,2,3,0,2,3,2,2,3,3,2,2,3,1,3,2,3,1]
 [2,2,1,1,2,3,1,2,2,1,2,3,1,3,3,3,2,0,1,1,3,3,2,2,2,1,1,3,1,1,3]
 [3,2,3,1,3,2,1,1,2,1,3,1,2,2,3,3,3,1,0,2,3,3,2,2,2,3,1,3,1,2,1]
 [3,2,1,3,2,2,3,2,3,1,3,2,1,2,1,3,2,1,2,0,2,1,1,2,3,1,3,1,1,2,2]
 [1,1,3,1,2,1,2,3,2,2,1,3,3,3,2,1,2,3,3,2,0,2,1,1,1,1,2,2,3,1,3]
 [3,3,2,2,3,3,2,1,1,1,2,1,1,2,2,1,3,3,3,1,2,0,1,3,1,1,2,3,3,3,1]
 [2,3,1,2,2,2,3,1,2,3,3,1,1,2,1,3,3,2,2,1,1,1,0,2,2,3,1,1,3,3,1]
 [1,3,3,3,2,3,1,3,1,3,1,1,3,2,2,3,2,2,2,2,1,3,2,0,2,2,2,3,3,1,2]
 [2,1,3,2,1,1,1,1,3,3,1,1,1,2,3,1,2,2,2,3,1,1,2,2,0,1,3,3,2,3,3]
 [2,1,1,2,2,1,3,1,2,2,3,2,2,1,3,1,3,1,3,1,1,1,3,2,1,0,1,3,2,2,2]
 [3,1,1,3,1,1,2,3,3,2,1,1,1,1,2,3,1,1,1,3,2,2,1,2,3,1,0,2,3,1,1]
 [3,1,1,3,3,3,2,2,1,2,3,2,1,1,1,2,3,3,3,1,2,3,1,3,3,3,2,0,1,2,1]
 [2,3,1,3,3,1,1,3,1,3,2,3,2,3,1,2,2,1,1,1,3,3,3,3,2,2,3,1,0,3,2]
 [1,3,2,2,3,1,3,2,3,2,1,2,2,1,3,3,3,1,2,2,1,3,3,1,3,2,1,2,3,0,3]
 [1,2,2,1,2,3,1,1,3,1,2,3,2,3,3,2,1,3,1,2,3,1,1,2,3,2,1,1,2,3,0]]
\end{verbatim}
\end{center}
}

\end{itemize}                  
\end{document}